\newcommand{\LLL}{\mathscr{L}}
\newcommand{\LL}{\mathsf{L}}
\newcommand{\WW}{\mathsf{W}}
\newcommand{\CC}{\mathsf{C}}
\newcommand{\ICC}{C^{\infty}}
\renewcommand{\dim}{\mathrm{dim}}
\newcommand{\vol}{\mathrm{vol}}
\newcommand{\Id}{{ d}}
\newcommand{\IR}{\mathbb{R}}
\newcommand{\IT}{T}
\newcommand{\Hess}{\operatorname{Hess}}
\newcommand{\ric}{\operatorname{Ric}}
\newcommand{\dist}{\operatorname{dist}}
\newcommand{\bx}{\bar{x}}
\newcommand{\IN}{\mathbb{N}}
\newcommand{\f}{\frac}
\newcommand{\nn}{\nonumber}
\newcommand{\rball}{B^{\IR^{m}}}
\newcommand{\II}{\mathrm{II}}
\renewcommand{\H}{\mathrm{H}}
\newcommand{\inj}{\mathrm{inj}}
\theoremstyle{plain}            
\newtheorem*{theorem*}{Theorem} 
\newtheorem*{corollary*}{Corollary}
\newtheorem*{proposition*}{Proposition}
\newtheorem{theorem}{Theorem}[section]
\newtheorem{theoremA}{Theorem}[section] 
\newtheorem{corollaryA}[theoremA]{Corollary}
\newtheorem{Lemma}[theorem]{Lemma}
\newtheorem{Theorem}[theorem]{Theorem}
\theoremstyle{definition}       
\newtheorem{Remark}[theorem]{Remark}
\newtheorem*{Acknowledgment*}{Acknowledgment}
\begin{document}
\begin{titlepage}\setcounter{page}{1}
\title[ Nonlinear Calder\'on-Zygmund inequalities for maps]{Nonlinear Calder\'on-Zygmund inequalities for maps}

\author[B. G\"uneysu]{Batu G\"uneysu}
\address{Batu G\"uneysu, Institut f\"ur Mathematik, Humboldt-Universit\"at zu Berlin, 12489 Berlin, Germany} \email{gueneysu@math.hu-berlin.de}

\author[S. Pigola]{Stefano Pigola}
\address{Stefano Pigola, Dipartimento di Scienza e Alta Tecnologia - Sezione di Matematica, Università dell'Insubria, 22100 Como, Italy} \email{stefano.pigola@uninsubria.it}

\end{titlepage}

\maketitle 

\begin{abstract} Being motivated by the problem of deducing $\LL^{p}$-bounds on the second fundamental form of an isometric immersion from $\LL^{p}$-bounds on its mean curvature vector field, we prove a nonlinear Calder\'on-Zygmund inequality for maps between complete (possibly noncompact) Riemannian manifolds.   
\end{abstract}

\section*{Introduction and main results}

Let $\psi :(M,g) \to (N,h)$ be an isometric immersion with second fundamental form $\II_{\psi}$ and mean curvature vector field $\H_{\psi} = \mathrm{trace}(\II_{\psi})$. A natural problem in the geometry of submanifolds is to determine how much information can be gained from the knowledge of the mean curvature of the given immersion $\psi$. Assuming that the geometric properties of the ambient space $N$ are sufficiently known, the submanifold geometry of $M$ is encoded into its second fundamental form and this latter, in turn, has strong interplays with the intrinsic geometry of $M$ via Gauss equations. Thus, one is led to investigate how much intrinsic knowledge of $M$ has to be combined with the knowledge of the mean curvature $\H_{\psi}$ in order to deduce information on $\II_{\psi}$. This paper aims at investigating the possibility of deducing $\LL^{p}$-bounds on $\II$ from a corresponding $\LL^{p}$-bound on $\H_{\psi}$. Since $\II_{\psi}$ and $\H_{\psi}$ are nothing but the (generalized) Hessian and Laplacian of the $1$-Lipschitz map $\psi$, it is natural to consider these estimates as a kind of  {\it $\LL^{p}$-Calder\'on-Zygmund inequalities} in the sense of \cite{GP}, but formulated for (special) manifold-valued maps. Thus, we are  led naturally to study possible extensions of the results of \cite{GP} to this more general framework. As a preliminary observation, we point out that when $(N,h) = \IR^{n}$ is the standard Euclidean space, a direct application of \cite[Theorem C]{GP} implies the validity of the following statement:

\begin{theorem*}
Assume that $(M,g)$ is a compact connected $m$-dimensional Riemannian manifold such that
\[
\vol(M) \leq V, \quad \|\ric\|_{\infty}\leq R, \quad r_{\inj}(M) \geq i
\]
for some constants $V,R,i\in (0,\infty)$. Then, for every $p\in (1,\infty)$ and $n \in \IN_{\geq 3}$, there exists a constant $C\in (0,\infty)$ depending only on $p$, $m$, $n$, $V$, $R$, $i$, such that the following estimate holds
\begin{equation}\label {eq-intro0}
 C^{-1} \| \II_{\psi} \|_{p} \leq 1+  \| \H_{\psi} \|_{p} +  \| \dist_{\IR^{n}}(\psi,0) \|_{p},
\end{equation} 
for every isometric immersion $\psi : M \to \IR^{n}$.
\end{theorem*}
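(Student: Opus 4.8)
The plan is to reduce the statement to the scalar, Euclidean-target Calder\'on-Zygmund inequality, applied componentwise, which is exactly what \cite[Theorem C]{GP} provides in a form uniform over the class of manifolds cut out by the bounds $V$, $R$, $i$. Write $\psi = (\psi^{1},\dots,\psi^{n})$, so that the Euclidean coordinate functions $\psi^{\alpha} = \langle \psi, e_{\alpha}\rangle$ belong to $\ICC(M)$. The first step is to record the pointwise identities valid for an isometric immersion into flat $\IR^{n}$: differentiating $\psi^{\alpha}$ twice and using the very definition of the second fundamental form,
\[
\Hess\psi^{\alpha}(X,Y) = \langle \II_{\psi}(X,Y),\, e_{\alpha}\rangle, \qquad \Delta\psi^{\alpha} = \langle \H_{\psi},\, e_{\alpha}\rangle ,
\]
whence $|\II_{\psi}|^{2} = \sum_{\alpha}|\Hess\psi^{\alpha}|^{2}$ and $|\H_{\psi}|^{2} = \sum_{\alpha}|\Delta\psi^{\alpha}|^{2}$ on $M$; moreover $\sum_{\alpha}|d\psi^{\alpha}|^{2} = |d\psi|^{2} \equiv m$, since $\psi$ is isometric.

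Next I would invoke \cite[Theorem C]{GP}: under exactly the hypotheses $\vol(M)\le V$, $\|\ric\|_{\infty}\le R$, $r_{\inj}(M)\ge i$ it furnishes a constant $C_{0} = C_{0}(p,m,V,R,i)$, independent both of the particular manifold in the class and of the function, such that
\[
\|\Hess u\|_{p} \;\le\; C_{0}\big(\,\|u\|_{p} + \|d u\|_{p} + \|\Delta u\|_{p}\,\big) \qquad \text{for every } u\in\ICC(M)
\]
(if the version one cites carries no first-order term, so much the better). Applying this with $u = \psi^{\alpha}$ yields $\|\Hess\psi^{\alpha}\|_{p} \le C_{0}\big(\|\psi^{\alpha}\|_{p} + \|d\psi^{\alpha}\|_{p} + \|\Delta\psi^{\alpha}\|_{p}\big)$ for each $\alpha \in \{1,\dots,n\}$.

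The remaining step is the bookkeeping with the $n$ components. Pointwise $|\psi^{\alpha}| \le \dist_{\IR^{n}}(\psi,0)$, hence $\|\psi^{\alpha}\|_{p} \le \|\dist_{\IR^{n}}(\psi,0)\|_{p}$; pointwise $|d\psi^{\alpha}| \le |d\psi| = \sqrt{m}$, hence $\|d\psi^{\alpha}\|_{p} \le \sqrt{m}\,\vol(M)^{1/p} \le \sqrt{m}\,V^{1/p}$, which is a constant and will therefore be absorbed by the additive $1$ in \eqref{eq-intro0}. Since the $\ell^{2}$- and $\ell^{p}$-norms on $\IR^{n}$ differ only by factors $c = c(n,p)$, the identities of the first paragraph give $\|\II_{\psi}\|_{p} \le c\big(\sum_{\alpha}\|\Hess\psi^{\alpha}\|_{p}^{p}\big)^{1/p}$ and $\big(\sum_{\alpha}\|\Delta\psi^{\alpha}\|_{p}^{p}\big)^{1/p} \le c\,\|\H_{\psi}\|_{p}$. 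Chaining the three displays, using Minkowski's inequality in $\ell^{p}(\{1,\dots,n\})$ to split the sum of the three terms, and collecting every purely dimensional factor into a single constant $C = C(p,m,n,V,R,i)$ produces \eqref{eq-intro0}.

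Since the theorem is advertised as a direct application, I do not expect a genuine obstruction; the step that most repays care is the invocation of \cite[Theorem C]{GP}, namely checking that the two-sided Ricci bound together with the injectivity-radius lower bound yields the (harmonic-radius type) regularity control under which that theorem produces its constant uniformly over the class --- this is what makes the final $C$ depend only on $p$, $m$, $n$, $V$, $R$, $i$ --- together with the elementary comparison of $\ell^{2}$ and $\ell^{p}$ norms on $\IR^{n}$, which is where the dependence of $C$ on $n$ enters; the range of validity $p\in(1,\infty)$, $n\ge 3$ is then inherited directly from \cite[Theorem C]{GP}. If one wishes to admit immersions of Sobolev class $W^{2,p}$ rather than smooth ones, a routine approximation argument, harmless because $M$ is compact, completes the picture.
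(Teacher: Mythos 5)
Your proposal is correct and it is exactly the argument the paper alludes to: the paper offers no separate proof, only the remark that the Theorem is ``a direct application of [GP, Theorem C],'' and what you wrote out is precisely that application made explicit --- apply the scalar Calder\'on--Zygmund inequality of [GP, Theorem C] (whose constant is uniform once $\|\ric\|_\infty$ and $r_{\inj}$ are controlled) to each Euclidean coordinate function $\psi^{\alpha}$, use the isometric-immersion identities $\Hess(\psi)=\II_{\psi}$, $\Delta\psi=\H_{\psi}$, $|d\psi|^{2}=m$, and absorb the uniformly bounded first-order term $\|d\psi^{\alpha}\|_{p}\le\sqrt{m}\,V^{1/p}$ into the additive constant $1$. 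One small inaccuracy: the constraint $n\in\IN_{\ge 3}$ is not ``inherited from [GP, Theorem C]'' (that result is about scalar functions on $M$ and has no $n$ in it); it is simply the obvious dimensional requirement for a nontrivially curved isometric immersion of an $m\ge 2$ manifold into $\IR^{n}$, and plays no role in the estimate beyond entering the $\ell^{2}$--$\ell^{p}$ comparison constant. Everything else is sound.
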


A natural consequence of estimate \eqref{eq-intro0} is represented by the next $\LL^{p}$ precompactness conclusion for isometric immersions in the spirit of \cite[Theorem 1.1]{Bre}:

\begin{corollary*}
Let $(M_k,g_{k})$ be a sequence of compact connected Riemannian manifolds satisfying the following conditions:
\[
(a)\,\, \dim(M_{k}) = m, \quad
(b)\,\, \vol(M_{k}) \leq V, \quad
(c)\,\, \|\ric_{M_{k}}\|_{\infty} \leq R, \quad
(d)\,\, r_{\inj}(M_{k}) \geq i>0.
\]
Let $\psi_{k}: M_{k} \to \IR^{n}$ be a sequence of isometric immersions with an $L^{p}$ uniform bound on  their mean curvature, namely:
\[
\| \H_{\psi_{k}} \|_{p} \leq H, 
\]
for some $m<p<+\infty$. Then, there exist an $m$-dimensional Riemannian manifold $M$, an isometric immersion $ \psi : M \to \IR^{n}$ and a sequence of points $y_{k} \in \IR^{n}$ such that, after possibly passing to subsequences, one has
\[
M_{k} \to M, \quad \psi_{k} - y_{k} \to \psi \quad\text{ in the $\CC^{1,\alpha}$-topology.}
\]
 \end{corollary*}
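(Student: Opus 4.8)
The plan is to combine two compactness mechanisms. The convergence of the domains will come from the Cheeger--Gromov--Anderson precompactness theorem, and the convergence of the (suitably translated) maps from turning the $L^p$-bound on the mean curvatures into a uniform $L^p$-bound on the second fundamental forms by means of the Theorem \eqref{eq-intro0}, which for $p>m$ is a uniform $W^{2,p}$-bound and hence compact in $\CC^{1,\alpha}$. First I would observe that, although no lower volume bound is assumed, conditions (b) and (d) already force a uniform diameter bound $\diam(M_k)\le D=D(m,V,i)$: by Croke's lower bound for the volume of small metric balls in terms of the injectivity radius, every ball of radius $i/2$ in $M_k$ has volume at least $c(m)(i/2)^m$, so a maximal $(i/2)$-separated subset of $M_k$ has at most $N_0=N_0(m,V,i)$ points; since $M_k$ is connected and is covered by the corresponding $N_0$ balls of radius $i/2$, its diameter is controlled. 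With (a), (c), (d) and this bound in hand, I would invoke the Cheeger--Gromov--Anderson theorem (through Anderson's harmonic-radius regularity) to obtain, along a subsequence, a compact smooth $m$-manifold $M$ with a metric $g$ of class $\CC^{1,\alpha}$ --- in fact $W^{2,q}$ for every $q<\infty$ --- and diffeomorphisms $\phi_k\colon M\to M_k$ with $\phi_k^*g_k\to g$ in $\CC^{1,\alpha}(M)$; this realizes the convergence $M_k\to M$.

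Next I would produce the uniform second-fundamental-form bound and transplant everything to $M$. Since $\psi_k$ is an isometric immersion it is $1$-Lipschitz, so $\psi_k(M_k)$ has Euclidean diameter $\le\diam(M_k)\le D$; choosing $q_k\in M_k$ and putting $y_k:=\psi_k(q_k)$ gives $\sup_{M_k}\dist_{\IR^n}(\psi_k,y_k)\le D$, hence $\|\dist_{\IR^n}(\psi_k-y_k,0)\|_p\le D\,V^{1/p}$. Each $\psi_k-y_k$ is an isometric immersion with $\II_{\psi_k-y_k}=\II_{\psi_k}$ and $\H_{\psi_k-y_k}=\H_{\psi_k}$, and every $M_k$ satisfies the hypotheses of the Theorem with the \emph{same} constants $V,R,i$; so the constant there does not depend on $k$, and \eqref{eq-intro0} yields $\|\II_{\psi_k}\|_p\le C(1+H+D\,V^{1/p})=:\Lambda$ for all $k$. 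Now set $u_k:=(\psi_k-y_k)\circ\phi_k\colon M\to\IR^n$: it is an isometric immersion of $(M,\phi_k^*g_k)$, so $\sup_M|u_k|\le D$ and $u_k$ is $1$-Lipschitz for $\phi_k^*g_k$, hence uniformly Lipschitz for $g$; and, by naturality of the Levi--Civita connection together with the identity $\Hess\psi_k=\II_{\psi_k}$ for the generalized Hessian of the $1$-Lipschitz map $\psi_k$, one has $\Hess_{\phi_k^*g_k}u_k=\phi_k^*\II_{\psi_k}$, so $\|\Hess_{\phi_k^*g_k}u_k\|_{L^p(M,\phi_k^*g_k)}\le\Lambda$. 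Because $\phi_k^*g_k\to g$ in $\CC^1$, the associated $W^{2,p}$-norms are uniformly equivalent (the two Hessians of $u_k$ differ by the tensor $(\Gamma_{\phi_k^*g_k}-\Gamma_g)\ast du_k$, with $\Gamma_{\phi_k^*g_k}$ and $du_k$ both uniformly bounded), whence $(u_k)$ is bounded in $W^{2,p}(M,g;\IR^n)$.

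Since $p>m=\dim M$, Morrey's theorem embeds $W^{2,p}(M)$ into $\CC^{1,\alpha}(M)$ with $\alpha=1-m/p$, compactly into $\CC^{1,\beta}$ for $\beta<\alpha$; so along a further subsequence $u_k\to\psi$ in $\CC^{1,\beta}(M;\IR^n)$ and weakly in $W^{2,p}$, so $\psi\in W^{2,p}\cap\CC^{1,\alpha}$. Passing to the limit in the first-fundamental-form identity $(du_k)^{*}\langle\cdot,\cdot\rangle_{\IR^n}=\phi_k^*g_k$ --- its left side converges in $\CC^0$ by the $\CC^1$-convergence of the $u_k$, its right side by the first step --- gives $(d\psi)^{*}\langle\cdot,\cdot\rangle_{\IR^n}=g$, so $d\psi$ is pointwise injective and $\psi\colon(M,g)\to\IR^n$ is an isometric immersion. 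Undoing the transplantation yields $M_k\to M$ and $\psi_k-y_k\to\psi$ in the $\CC^{1,\alpha}$-topology (in fact $\CC^{1,\beta}$, hence $\CC^{1,\alpha}$ a fortiori). If $n<3$ one first composes each $\psi_k$ with a fixed linear inclusion $\IR^n\hookrightarrow\IR^3$ before applying the Theorem; since the limit of the $u_k$ lies in the closed subset $\IR^n\subset\IR^3$, so does $\psi$.

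The step I expect to be the main obstacle is the transplantation in the second paragraph, namely converting the $L^p$-bound on $\II_{\psi_k}$ into a genuine uniform $W^{2,p}$-bound for the $u_k$ on the \emph{fixed} manifold $M$. This requires that the discrepancy between the Hessians taken with respect to the varying metrics $\phi_k^*g_k$ and the fixed metric $g$ stay uniformly bounded, which uses precisely --- and no more than --- the $\CC^1$ (equivalently $W^{2,p}$) convergence $\phi_k^*g_k\to g$ furnished by Anderson's theorem, together with the uniform Lipschitz bound on the $u_k$. Once this matching of regularities is arranged, the remaining ingredients --- the diameter bound, the uniform use of \eqref{eq-intro0}, and the Morrey/Rellich compactness --- are routine.
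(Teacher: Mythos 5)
Your proof follows the paper's outline precisely: Anderson precompactness yields the intrinsic $\CC^{1,\alpha}$ limit of the $M_k$, a uniform diameter bound controls $\|\dist_{\IR^n}(\psi_k-y_k,0)\|_p$ after recentering, the Theorem gives the uniform $L^p$ bound on $\II_{\psi_k}$, and Morrey compactness (as in \cite{Bre}) produces the limiting isometric immersion --- your write-up just fills in the transplantation details that the paper leaves implicit. The one small deviation is that you obtain the diameter bound from Croke's injectivity-radius volume estimate, using only (b) and (d), whereas the paper derives the volume lower bound from harmonic-coordinate control, using (c) and (d); both routes are valid and give the same conclusion.
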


Note  that the existence of an intrinsic  $\CC^{1,\alpha}$ limit of the sequence $M_{k}$ follows from the classical Anderson precompactness result, \cite[Theorem 1.1]{And}. We are grateful to the anonymous referee for having pointed this out. In fact, the intrinsic diameter of the sequence $M_{k}$ is uniformly bounded by a constant $D=D(V,R,i)>0$. This follows e.g. from the fact that, by (c) and (d),  the metric coefficients in harmonic coordinates, hence the volumes, are uniformly controlled on any ball of fixed radius  $r=r(R,i)>0$; see the end of Section \ref{notation}. Thus, by (b), there cannot be too many disjoint balls with uniform radius centered along a minimizing geodesic that realize the diameter.

Now, the key point to prove the Corollary is to show that the $\LL^{p}$-norms of the second fundamental forms of the immersions $\psi_{k}$ are uniformly bounded. Note that, up to translating  each $\psi_{k}(M_{k})$ so to have $0 \in \psi_{k}(M_{k})$, we can always assume
\[
\| \dist_{\IR^{n}}(\psi_{k},0) \|_{p} \leq C
\]
for some uniform constant $C=C(V,R,i,p)>0$. Indeed, since the maps $\psi_{k}$ are $1$-Lipschitz, this is a consequence of the uniform intrinsic diameter bound combined with condition (b). We are in the position to use estimate \eqref{eq-intro0} of the Theorem. \smallskip

In case the ambient space $(N,h)$ is a complete manifold with nontrivial topology, estimates like \eqref{eq-intro0}  are not accessible via methods involving Euclidean targets. The use of Nash\rq{}s embedding theorem does not help here, because, due to its implicit nature, it is not possible to control the resulting extrinsic geometry. New methods, modelled from the very beginning on manifold valued maps need to be implemented. The alluded extension of \eqref{eq-intro0} can be seen as a special case of  the next  {\it nonlinear $\LL^p$-Calder\'on-Zygmund inequality} involving $\LLL$-Lipschitz continuous maps, $0 \leq \LLL \leq +\infty$. As usual, by an $\infty$-Lipschitz map we shall mean a map that is just continuous. Moreover, given positive extended real numbers $a,b \in \IR_{> 0} \cup \{+\infty\}$ we agree to set $a/b =1$ if $a=b=+\infty$. We refer to Section \ref{notation} for more details on notation and definitions.
\begin{theoremA}\label{main}
Let $(M,g)$, $(N,h)$ be connected Riemannian manifolds and set $m:=\dim (M)$, $n:=\dim(N)$. Assume that $M$ is geodesically complete with $\ric_{M} \geq -A$ for some $0 \leq A <+\infty$ and that its $\CC^{1,1/2}$-harmonic radius satisfies $r_{1,1/2}(M) >0$. Assume also that $N$ is geodesically complete with $\CC^{1,1/2}$-harmonic radius $r_{1,1/2}(N) >0$. Then for every $1< p< +\infty$, there exists a constant $C=C(p,m,n,A)>0$, which only depends on the indicated parameters, such that for all $0\leq \LLL \leq +\infty$, all $\LLL$-Lipschitz continuous maps $\psi \in \CC^{2}(M,N)$,  and any $o \in N$, one has 
\begin{align*} 
  C^{-1}\| \Hess ( \psi)\|_{p}  \leq    \| \Delta \psi\|_{p} + r^{-1}\|  d\psi\|_{p}+  r_{1,1/2}(N)^{-1} \| d\psi \|_{2p}^2 +
r^{-2}\|    \dist_{N}(\psi , o) \|_{p},
 \end{align*} 
where we have set
$$
r  =  \min \left(r_{1,1/2}(M), \frac{r_{1,1/2}(N)}{\max(\LLL,1)} , 1\right).
$$
\end{theoremA}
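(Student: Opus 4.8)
The plan is to localize the estimate using the harmonic coordinate charts on both $M$ and $N$, reduce to the classical linear Calderón–Zygmund inequality in Euclidean balls, and then patch the local estimates together with a covering argument controlled by the harmonic radius. First I would fix $o \in N$ and, for a point $x \in M$, pass to a $\CC^{1,1/2}$-harmonic coordinate chart $\Omega \subset M$ around $x$ of radius comparable to $r_{1,1/2}(M)$, in which the metric coefficients $g_{ij}$ are close to $\delta_{ij}$ in $\CC^{1,1/2}$, and simultaneously to a $\CC^{1,1/2}$-harmonic chart on $N$ around $\psi(x)$ of radius comparable to $r_{1,1/2}(N)$. The key geometric input here is that, since $\psi$ is $\LLL$-Lipschitz, the $\psi$-preimage of the target harmonic ball contains an intrinsic ball of radius $\gtrsim r_{1,1/2}(N)/\max(\LLL,1)$, which is exactly why the quantity $r$ in the statement is defined the way it is: on a ball of radius $\lesssim r$ around $x$, the composition $\psi$ has image entirely inside one target harmonic chart.

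In these coordinates, write $\psi = (\psi^{1},\dots,\psi^{n})$. The intrinsic Hessian and Laplacian unfold as
\begin{align*}
(\Hess \psi)^{\gamma}_{ij} &= \partial_i \partial_j \psi^{\gamma} - \Gamma^{M,k}_{ij}\,\partial_k \psi^{\gamma} + \Gamma^{N,\gamma}_{\alpha\beta}(\psi)\,\partial_i \psi^{\alpha}\,\partial_j \psi^{\beta},\\
(\Delta \psi)^{\gamma} &= g^{ij}\big(\partial_i \partial_j \psi^{\gamma} - \Gamma^{M,k}_{ij}\,\partial_k \psi^{\gamma}\big) + g^{ij}\,\Gamma^{N,\gamma}_{\alpha\beta}(\psi)\,\partial_i \psi^{\alpha}\,\partial_j \psi^{\beta}.
\end{align*}
Thus the Euclidean Hessian $\partial_i\partial_j \psi^{\gamma}$ equals $\Delta\psi$ plus a term linear in $d\psi$ with coefficients the Christoffel symbols of $M$ (controlled by $r_{1,1/2}(M)^{-1} \lesssim r^{-1}$ in $\LL^{\infty}$, by the harmonic radius bound and Schauder-type estimates) plus a quadratic term in $d\psi$ whose coefficients are Christoffel symbols of $N$ (controlled by $r_{1,1/2}(N)^{-1}$). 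Applying the scalar linear Calderón–Zygmund inequality on the harmonic ball to each component $\psi^{\gamma}$ — using that $g^{ij}$ is uniformly elliptic with $\CC^{0,1/2}$ coefficients on a ball whose radius is controlled from below, which is where the dimensional/Ricci constants $m,n,A$ and the scale-invariant form of the inequality enter — yields, after reabsorbing, a local bound of the shape
\[
\|\partial^2 \psi\|_{\LL^{p}(B_r(x))} \lesssim \|\Delta\psi\|_{\LL^{p}(B_{2r}(x))} + r^{-1}\|d\psi\|_{\LL^{p}(B_{2r}(x))} + r_{1,1/2}(N)^{-1}\|d\psi\|_{\LL^{2p}(B_{2r}(x))}^{2} + r^{-2}\|\dist_N(\psi,o)\|_{\LL^{p}(B_{2r}(x))},
\]
the last term coming from the zeroth-order control needed to bound $\|\psi^{\gamma}\|_{\LL^p}$ by the distance function (translating the target chart so that $o$ corresponds to the origin). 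Note the quadratic term must be estimated via Hölder as $\|d\psi\|_{2p}^{2}$, which forces the appearance of the $\LL^{2p}$ norm; this is harmless since $p>1$ ensures $2p < \infty$.

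Finally I would choose, using Ricci curvature bounded below and a standard Vitali/Calabi-type argument, a countable cover of $M$ by balls $B_r(x_i)$ with the balls $B_{2r}(x_i)$ having bounded overlap by a constant $N_0 = N_0(m,A)$ (here the lower Ricci bound and Bishop–Gromov give the packing estimate at the fixed scale $r$). Summing the $p$-th powers of the local estimates over $i$, using bounded overlap to turn $\sum_i \|\cdot\|_{\LL^p(B_{2r}(x_i))}^p \lesssim \|\cdot\|_{\LL^p(M)}^p$, and taking $p$-th roots gives the global inequality with $C = C(p,m,n,A)$. The main obstacle I anticipate is the bookkeeping at the two ends $\LLL = 0$ and $\LLL = +\infty$: when $\LLL = \infty$ the map is merely continuous so "$\LLL$-Lipschitz" imposes nothing, $r = 0$, and all terms with negative powers of $r$ are $+\infty$, making the inequality vacuously true (consistent with the convention $a/b=1$ for $a=b=+\infty$); one must check the degenerate conventions are handled so the statement is literally correct, and more substantively one must verify that the constant in the local Calderón–Zygmund estimate genuinely depends only on the ellipticity and Hölder bounds guaranteed by the harmonic radius — i.e. is \emph{scale-invariant} under the rescaling that normalizes the harmonic ball to unit size — so that no hidden dependence on $r_{1,1/2}(M)$ or $r_{1,1/2}(N)$ beyond the explicit factors creeps into $C$.
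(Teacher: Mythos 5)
Your overall architecture --- harmonic charts on both source and target, unfolding $\Hess(\psi)$ and $\Delta\psi$ in coordinates, applying a scaled scalar Calder\'on--Zygmund estimate (Gilbarg--Trudinger, Theorem 9.11, in its scale-invariant form) componentwise, and patching with a bounded-multiplicity cover coming from $\ric_M\geq -A$ --- matches the paper's, and your identification of $r=\min\big(r_{1,1/2}(M),\,r_{1,1/2}(N)/\max(\LLL,1),\,1\big)$ as the scale at which $\psi$ maps source balls into a single target chart is correct.

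There is, however, a genuine gap at the zeroth-order term. You take a target chart centered at $\psi(x)$ (which is forced: the Lipschitz bound only guarantees $\psi(B^M_{2r}(x))$ stays near $\psi(x)$, not near $o$) and then say you ``translate the target chart so that $o$ corresponds to the origin.'' These two choices are mutually incompatible whenever $\dist_N(\psi(x),o)$ exceeds the radius of that chart: $o$ then lies outside the chart domain and no translation places it at the origin. What the local scalar estimate actually produces, with the chart centered at $\psi(x)$, is a zeroth-order term $r^{-2}\|\dist_N(\psi,\psi(x))\|_{\LL^{p}(B^M_{2r}(x))}$ whose reference point moves with $x$. Without a further argument these do not combine, after summing over the cover, into the single global term $r^{-2}\|\dist_N(\psi,o)\|_{p}$.

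The paper resolves this with a dichotomy decomposition $M=\Omega_{\psi,o}\cup\Omega_{\psi,o}^{c}$, where $\Omega_{\psi,o}=\psi^{-1}\big(B^N_{r_{1,1/2}(N)/4}(o)\big)$. For centers $\bx\in\Omega_{\psi,o}$, the image $\psi(B^M_{2\hat r}(\bx))$ still sits inside $B^N_{r_{1,1/2}(N)/2}(o)$, so one may center the target chart at $o$ itself and the zeroth-order term is already $\dist_N(\psi,o)$. For $\bx\in\Omega_{\psi,o}^{c}$, one centers the chart at $\psi(\bx)$ but then observes that, since $\dist_N(\psi(\bx),o)\geq r_{1,1/2}(N)/4$ while $\dist_N(\psi(x),\psi(\bx))<r_{1,1/2}(N)/8$ for $x\in B^M_{2\hat r}(\bx)$, the triangle inequality forces $\dist_N(\psi(x),\psi(\bx))\leq \dist_N(\psi(x),o)$, so the moving reference point $\psi(\bx)$ may be replaced by the fixed reference point $o$ for free. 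This elementary but essential observation --- the local zeroth-order term is dominated by the one based at $o$ precisely when $\psi(\bx)$ is far from $o$ --- is what allows the local estimates with varying chart centers to sum to the stated global inequality, and it is the step your proposal skips over.
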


The proof of Theorem \ref{main} will be given in Section \ref{proofs}, where we will also observe that a less precise version of this global inequality can be stated for a slightly larger family of uniformly continuous maps. 
 \vspace{2mm}

Note that in case $\LLL=+\infty$, the statement of Theorem \ref{main} becomes nontrivial only in case $r_{1,1/2}(N)=+\infty$. Note also that an important feature of the above global inequality is that it entails that the possible presence of the $\| d\psi \|_{2p}^2 $ term is due to the possible finiteness of $r_{1,1/2}(N)$. Namely, when $r_{1,1/2}(N)=+\infty$ the $\| d\psi \|_{2p}^2 $ term cancels and the ``traditional'' Calder\'on-Zygmund inequality holds, without imposing any assumption of uniform continuity on the map. Thus Theorem \ref{main} actually recovers, with a new quantitative dependence on the harmonic radius of the source, one of the Euclidean-target results in \cite{GP}. The reason is explained in the next Proposition, that gives rise to the following interesting interpretation: the presence of the $\| d\psi \|_{2p}^2 $ term in the nonlinear Calder\'on-Zygmund inequality measures the curvature on $N$. 

\begin{proposition*}
Let $(M,g)$ be a complete, non-compact, connected $m$-dimensional Riemannian manifold and assume that there exists some $o \in M$ and some $\alpha\in (0,1)$ such that $r_{1,\alpha}(o)= +\infty$. Then, $(M,g)$ is isometric to the Euclidean $\IR^m$.
\end{proposition*}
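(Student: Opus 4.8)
The plan is to show that an infinite $\CC^{1,\alpha}$-harmonic radius at a point forces the manifold to be flat and simply connected at infinity in a quantitative way, and then to invoke the characterization of Euclidean space among complete manifolds. First I would unwind the definition of $r_{1,\alpha}(o)=+\infty$: for every $R>0$ there is a harmonic coordinate chart on the ball $B(o,R)$ in which the metric coefficients $g_{ij}$ are uniformly close to $\delta_{ij}$ in $\CC^{0}$ and have $\CC^{0,\alpha}$-norms of their first derivatives bounded by a fixed small constant, with constants \emph{independent of $R$}. The key consequence is a two-sided bound $\tfrac12 \delta_{ij}\leq g_{ij}\leq 2\delta_{ij}$ (say) on $B(o,R)$ for all $R$, uniformly, together with a uniform Hölder bound on $\partial g_{ij}$.

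Next I would extract curvature decay. In harmonic coordinates the Ricci tensor satisfies the quasilinear elliptic system $\Delta_g g_{ij} = -2\ric_{ij} + Q(g,\partial g)$, where $Q$ is quadratic in $\partial g$. Since the harmonic radius is infinite, the rescaled charts on $B(o,R)$ pulled back to a fixed ball $B(0,1)\subset\IR^m$ via $x\mapsto x/R$ give metrics $g^{(R)}$ that are uniformly bounded in $\CC^{1,\alpha}$ and uniformly non-degenerate; by Schauder / elliptic estimates a subsequence converges in $\CC^{1,\beta}_{\mathrm{loc}}$ to a limit metric $g^{(\infty)}$ on $\IR^m$ which is again uniformly $\CC^{1,\alpha}$ and non-degenerate on all of $\IR^m$. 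The heart of the argument is that the Hölder constant of $\partial g$ does not blow up under this rescaling precisely because $r_{1,\alpha}=+\infty$; combined with the scaling weights of the curvature tensor ($|\Rm|$ scales like $R^{-2}$ under $x\mapsto x/R$) one concludes that the limit metric is flat, and in fact that $(M,g)$ itself has a global harmonic chart $\Phi:M\to\IR^m$ in which $g_{ij}\to\delta_{ij}$ at infinity and $\partial g_{ij}\to 0$ at infinity.

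From here I would finish as follows. The global harmonic chart $\Phi$ gives a proper map from the complete manifold $M$ to $\IR^m$ which is a local diffeomorphism with uniformly bidil metric distortion, hence a covering map; since $\IR^m$ is simply connected and $M$ is connected, $\Phi$ is a diffeomorphism. Thus $(M,g)$ is $\IR^m$ equipped with a global metric $g$ that is uniformly equivalent to the Euclidean one and whose first derivatives vanish at infinity, in harmonic coordinates. A Bochner-type / monotonicity argument, or alternatively the ``harmonic-radius gap'' rigidity (the round/flat model is the only complete manifold whose harmonic radius is unbounded along a ray \emph{and} which is asymptotically flat in this $\CC^{1,\alpha}$ sense), then upgrades ``flat at infinity'' to ``flat everywhere'': the sectional curvature, being continuous and forced to decay to zero along every direction while the global topology is that of $\IR^m$, must vanish identically — e.g. by transplanting the infinite harmonic radius to every point via the homogeneity that the estimate at scale $R\to\infty$ provides, one gets $r_{1,\alpha}(x)=+\infty$ for all $x\in M$, hence $|\Rm|(x)=0$ for all $x$. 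A complete simply connected flat manifold is isometric to $\IR^m$, which is the claim.

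The main obstacle I anticipate is the middle step: rigorously converting ``$r_{1,\alpha}(o)=+\infty$'' into ``$\Rm\equiv 0$''. One must be careful that the harmonic radius being infinite at a single point $o$ does not a priori control the geometry away from $o$ in a pointed sense; the rescaling argument must be set up so that the uniform-in-$R$ Hölder control on the metric in the chart centered at $o$ genuinely caps the curvature tensor at every point of $B(o,R)$ and not merely near $o$. The clean way to do this is to recall that in a harmonic chart on $B(o,R)$ with the stated uniform $\CC^{1,\alpha}$ bounds, interior Schauder estimates for the equation $\Delta_g g_{ij} = -2\ric_{ij} + Q(g,\partial g)$ yield $\CC^{1,\alpha}$ bounds on $g$ on $B(o,R/2)$ with constants independent of $R$; letting $R\to\infty$ gives a \emph{global} $\CC^{1,\alpha}$ harmonic chart on all of $M$, after which the curvature, expressed in these coordinates, is seen to have arbitrarily small $\CC^{0}$-norm on $B(o,R)\setminus B(o,R/2)$ for large $R$ — and then a covering/exhaustion argument propagates flatness inward. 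This is where the bulk of the technical work lies, and it is exactly the ``curvature measures the extra term'' heuristic made precise.
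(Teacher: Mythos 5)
Your starting point is right: the uniform $\CC^{1,\alpha}$ bounds coming from $r_{1,\alpha}(o)=+\infty$ should be exploited via a compactness argument to extract a limit. But the route you take from there has one backwards scaling claim and one genuine gap, and it misses the much simpler mechanism that the paper uses.

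\textbf{The rescaling is unnecessary and the curvature scaling claim is backwards.} Pulling back to $B(0,1)$ by $x\mapsto Rx$ and normalizing (so that $g^{(R)}_{ij}(x)=g_{ij}(Rx)$) multiplies sectional curvature by $R^{2}$, not $R^{-2}$; zooming out amplifies curvature, it does not kill it, so this rescaling cannot by itself force a flat limit. Moreover, none of this (nor the Ricci elliptic system, nor Schauder) is needed. The paper keeps the harmonic charts $\varphi_k:B_{R_k}(o)\to\Omega_k\subset\IR^m$ un-rescaled. Condition \eqref{hr1} ensures $\{\Omega_k\}$ exhausts $\IR^m$, and on each fixed compact set $\Omega_{k_0}$ condition \eqref{hr2} gives, for all $k$ large, the bound $\sup_{\Omega_{k_0}}|\partial(g_k)_{ij}|\leq R_k^{-1}\to 0$. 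An Arzel\`a--Ascoli plus diagonal argument then produces a $\CC^1_{\loc}$ limit $g_\infty$ on $\IR^m$, which is uniformly non-degenerate (by \eqref{hr1}) and whose first derivatives vanish identically, so $g_\infty$ has constant coefficients and $(\IR^m,g_\infty)$ is flat Euclidean. No curvature bookkeeping is required.

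\textbf{The final step is a genuine gap.} Your argument passes from ``flat at infinity'' plus a covering map $\Phi:M\to\IR^m$ to ``flat everywhere'' by asserting that decaying curvature on a space diffeomorphic to $\IR^m$ forces vanishing curvature, and by ``transplanting'' $r_{1,\alpha}=+\infty$ to every point. Neither of these is justified: there are complete metrics on $\IR^m$ whose curvature decays at infinity but does not vanish, and $r_{1,\alpha}(x)=+\infty$ at a point does not directly yield $\mathrm{Rm}(x)=0$ because the chart realizing the harmonic radius changes with the scale $R$. The paper finishes instead by an identification of limits: the pointed Riemannian manifolds $(B_{R_k}(o),g,o)$, read through the harmonic charts, converge in pointed $\CC^1$-topology to $(\IR^m,g_\infty,0)$; but the same sequence trivially converges in pointed $\CC^\infty$-topology to $(M,g,o)$ since it exhausts $M$. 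Pointed $\CC^1$-convergence implies pointed Gromov--Hausdorff convergence, which for proper metric spaces has a unique limit up to isometry; combined with the Myers--Steenrod theorem, this gives a Riemannian isometry $(M,g)\cong(\IR^m,g_\infty)\cong\IR^m$. This one uniqueness-of-limits argument replaces your entire last paragraph and closes the gap cleanly.
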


Although the result might be known to the experts, we were not able to find any specific reference and, therefore, a complete proof is given in the final appendix.
 \vspace{2mm}

As a simple consequence of Theorem \ref{main} we point out  the following result that, as announced, extends estimate \eqref {eq-intro0} to more general targets. For comparison, recall that the harmonic radii appearing in the statement can be estimated from below by combining a double sided bound on the Ricci tensor  with a positive lower bound of the injectivity radius; see the end of Section \ref{notation} for more details.
\begin{corollaryA}
Let $(M,g)$ be compact and assume that the ambient manifold $(N,h)$ is complete, with $r_{1,1/2}(N)>0$. Then, for every $1<p<+\infty$ and for every $A \geq 0$ such that $\ric_{M}\geq -A$, there exists a constant
$$
C=C\big(p,\dim(M),\dim(N), A\big)>0,
$$
which only depends on the indicated parameters, such that for every isometric immersion $\psi:M\to N$ one has
$$
C^{-1}\| \II_{\psi}\|_{p}  \leq    \| \H_{\psi}\|_{p} + \mathrm{vol}(M)^{1/p}\Big(r^{-1}+  r_{1,1/2}(N)^{-1}  +
r^{-2}  \mathrm{diam}_{N}(\psi (M))\Big)  ,
$$ 
where
$$
r  =  \min \left(r_{1,1/2}(M), r_{1,1/2}(N),1\right).
$$
\end{corollaryA}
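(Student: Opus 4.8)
The plan is to derive the Corollary by specializing Theorem \ref{main} to the case of an isometric immersion $\psi : M \to N$ with $M$ compact. First I would record the two identifications that make this a legitimate specialization: for an isometric immersion, $\Hess(\psi)$ is (up to the usual identification of the normal bundle with a subbundle of $\psi^*TN$) precisely the second fundamental form $\II_\psi$, and its trace $\Delta\psi = \H_\psi$; moreover $\psi$ is $1$-Lipschitz, so we may take $\LLL = 1$, and then $\max(\LLL,1)=1$, which is why the parameter $r$ collapses to $r = \min(r_{1,1/2}(M), r_{1,1/2}(N), 1)$ as stated. Since $M$ is compact and connected it is automatically geodesically complete, its Ricci curvature is bounded below by some $-A$ with $A\ge 0$, and its $\CC^{1,1/2}$-harmonic radius is positive; thus all hypotheses of Theorem \ref{main} hold with $\psi \in \CC^\infty(M,N) \subset \CC^2(M,N)$.

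Next I would apply Theorem \ref{main} verbatim with this $\psi$ and with $o \in N$ chosen to be any point of $\psi(M)$ (this is harmless since $M$ is compact, and it is what makes the distance term finite and controllable). This yields
\begin{equation*}
C^{-1} \| \II_\psi \|_p \leq \| \H_\psi \|_p + r^{-1} \| d\psi \|_p + r_{1,1/2}(N)^{-1} \| d\psi \|_{2p}^2 + r^{-2} \| \dist_N(\psi, o) \|_p,
\end{equation*}
with $C = C(p, \dim M, \dim N, A)$. The remaining work is purely to estimate the last three terms on the right using compactness and the $1$-Lipschitz property. Since $\psi$ is an isometric immersion, $|d\psi|_x$ is pointwise bounded by a dimensional constant (indeed $|d\psi|^2 = m$ in the Hilbert--Schmidt norm, or $|d\psi|_{\mathrm{op}}=1$); hence $\| d\psi \|_p \le c(m)\,\vol(M)^{1/p}$ and $\| d\psi \|_{2p}^2 \le c(m)\,\vol(M)^{1/p}$, where the exponent on the volume in the second term is also $1/p$ because of the square on the $2p$-norm. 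For the distance term, with $o = \psi(x_0) \in \psi(M)$ we have $\dist_N(\psi(x), o) \le \diam_N(\psi(M))$ pointwise, so $\| \dist_N(\psi,o)\|_p \le \vol(M)^{1/p}\,\diam_N(\psi(M))$. Collecting these and factoring out $\vol(M)^{1/p}$ gives exactly the asserted inequality, after absorbing the dimensional constants $c(m)$ into $C$ (which is legitimate since $C$ is already allowed to depend on $\dim M$).

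There is essentially no hard step here — the Corollary is a direct translation — but if anything needs care it is the bookkeeping around norms: making sure the $2p$-norm squared produces a $\vol(M)^{1/p}$ rather than $\vol(M)^{1/(2p)}$ factor, and making sure the freedom to choose $o \in \psi(M)$ is justified (one might instead keep $o$ arbitrary and bound $\dist_N(\psi(x),o) \le \dist_N(\psi(x),\psi(x_0)) + \dist_N(\psi(x_0),o)$, but choosing $o \in \psi(M)$ is cleaner and is what yields the clean $\diam_N(\psi(M))$ term). One should also note explicitly that the constant $C$ in the Corollary inherits its dependence only on $p, \dim M, \dim N, A$ from Theorem \ref{main}, with no dependence on the harmonic radii themselves — those enter only through the explicit right-hand side via $r$ and $r_{1,1/2}(N)$ — which is precisely the point emphasized in the remark preceding the statement.
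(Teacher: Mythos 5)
Your proof is correct and is essentially the intended argument: the paper states the Corollary as a direct consequence of Theorem \ref{main} without spelling out the details. You correctly identify that $\Hess(\psi)=\II_\psi$ and $\Delta\psi=\H_\psi$ for an isometric immersion, that $\psi$ is $1$-Lipschitz so $\LLL=1$ collapses the parameter $r$, that $|d\psi|^2_{\mathrm{HS}}=m$ pointwise, that $\|d\psi\|_{2p}^2=\bigl(\int|d\psi|^{2p}\bigr)^{1/p}$ scales as $\vol(M)^{1/p}$, and that taking $o\in\psi(M)$ gives the $\diam_N(\psi(M))$ term; absorbing the dimensional constants into $C$ completes the proof.
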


The possible applications of global Calder\'on-Zygmund inequalities for maps go beyond the immersion theory. By way of example, a natural problem first investigated by W. Chen and J. Jost in \cite{CJ} concerns with the study of solutions of the \textit{prescribed tension field equation}, i.e., the Poisson equation for a manifold-valued map $\psi: M \to N$. In their seminal paper, Chen and Jost consider the case of compact targets having a non-positive curvature. In particular, they provide a linear(!) $\WW^{2,2}$ a-priori estimate for a map in a give homotopy class by integrating the Bochner formula. Using our completely different approach we are able to extend their $\LL^{2}$ result to much more general situations. There is a prize to pay: our $\LL^{p}$ estimate no more depends on the homotopy class of a given map and, thus, presents a nonlinear term which measure, in some sense, the curvature of the target space; see Theorem \ref{main}. Yet, we believe that our estimate  will still be useful to investigate  existence properties of maps with prescribed ($p$-)tension field.

\section{Some notation}\label{notation}

Given a Riemannian manifold\footnote{If nothing else is said, we understand our manifolds to be smooth and without boundary.} $M=(M,g)$ we denote with $\nabla$ its Levi-Civita connection and with $\mathrm{vol}$ the Riemannian volume measure $ \dist(x,y)$ the geodesic distance and $B_r(x)$ the corresponding open balls. The injectivity radius at $x$ is denoted by $r_{\mathrm{inj}}(x)\in (0,\infty]$. Duals, tensor products and pull-backs of Euclidean vector bundles will be equipped with their canonically given Euclidean metrics, while with the usual abuse of notation these metrics will all be denoted with $|\cdot|$. We understand all our vector bundles and function spaces to be over the field $\IR$. Given another Riemannian manifold $N=(N,h)$, we will sometimes write $\nabla^M$ and $\nabla^N$ etc. in order to distinguish these data. Set $m:=\dim(M)$, $n:=\dim(N)$. For any smooth map $\psi:M\to N$, the vector-bundle $\IT M\otimes \psi^*(\IT N) \to M$ comes equipped with the tensor product covariant derivative 
$$
\nabla^{\psi}=1\otimes (\psi^*\nabla^N) + \nabla^M\otimes 1.
$$ 
In view of
\begin{align*}
\Id \psi\in \Gamma_{\ICC} (M,\IT^* M\otimes \psi^*(\IT N) ),
\end{align*}
one can consider the generalized Hessian 
\begin{align*}
\mathrm{Hess}(\psi):=\nabla^{\psi}\Id \psi \in \Gamma_{\ICC} (M,\IT^* M\otimes\IT^* M\otimes \psi^*(\IT N) ),
\end{align*}
as well as its generalized Laplacian
\begin{align*}
\Delta \psi:=\mathrm{tr}_{1,2}(\mathrm{Hess}(\psi)) \in \Gamma_{\ICC} (M, \psi^*(\IT N) ).
\end{align*}
In case $N$ is the Euclidean $\IR^n$, then everything boils down to the usual definitions.

All $\LL^p$-norms are always understood with rexpect to $\mathrm{vol}(dx)$. For example, given a Borel section $\Psi$ in the Euclidean vector bundle $E\to M$ one has 
$$
\left\|\Psi\right\|_p=\big(\int_M |\Psi(x)|^p  \mathrm{vol}(dx)\big)^{1/p},
$$
with the obvious adaption for $p=\infty$. Of course, this notation includes real-valued functions $\Psi:M\to \IR$, which under standard identifications correspond to sections in the trivial line bundle over $M$.

We also record a (slightly modified) definition of the $\CC^{k,\alpha}$-harmonic radius: Given $x\in M$, $k\in\IN_{\geq 0}$, $\alpha\in (0,1)$. Then the $\CC^{k,\alpha}$-harmonic radius $r_{k,\alpha}(x)$ of $M $ at $x$ is defined to be the supremum of all $r>0$ such that $B_{r}(x)$ is relatively compact and admits a centered $\Delta$-harmonic coordinate system $\phi: B_{r}(x)\to  U\subset  \IR^m$ having the following properties: one has
\begin{align}\label{hr1}
(1/2)(\delta^{ij})\leq (g^{ij})\leq 2 (\delta^{ij})\quad\text{ in $B_{r}(x)$ as symmetric bilinear forms}, 
\end{align}
 and for all $i,j\in\{1,\dots, m\}$,
\begin{align}\label{hr2}
& \sum_{\beta\in\IN^m, 1\leq |\beta|\leq k}r^{|\beta|} \sup_{x\rq{}\in B_{r}(x) } |\partial_{\beta}g^{ij}(x\rq{})|+\sum_{\beta\in\IN^m, |\beta|=k}r^{k+\alpha} \sup_{x\rq{},x\rq{}\rq{}\in B_{r}(x), x\rq{}\rq{}\ne x\rq{} }\f{ |\partial_{\beta}g^{ij}(x\rq{})-\partial_{\beta}g^{ij}(x\rq{}\rq{})|}{|x\rq{}-x\rq{}\rq{}|^{\alpha}}\\ \nonumber
&\leq 1.\nn
\end{align}

For every $x\in M$ one has $r_{k,\alpha}(x)\in  (0,\infty]$, and as the function $x\mapsto \min(r_{k,\alpha}(x),1)$ is Lipschitz (cf. Proposition A.1 in \cite{Bru}), it follows that $\inf_U r_{k,\alpha}\in (0,\infty]$ in case $U$ is relatively compact, and  
$$
r_{k,\alpha}(M):=\inf_M r_{k,\alpha}\in [0,\infty]
$$
is the $\CC^{k,\alpha}$-harmonic radius of $M$. We we will be particularly interested in the case $k=1$, where in case $M$ is geodesically complete with $\left\|\mathrm{Ric}\right\|_{\infty}<\infty$ and $r_{\mathrm{inj}}(M)>0$ one has
\begin{align*}
r_{k,\alpha}(M)\geq C>0,
\end{align*}
for some constant $C$ depending on $m$, $\left\|\mathrm{Ric}\right\|_{\infty}$ and $r_{\mathrm{inj}}(M)$.

\section{Proof of Theorem \ref{main}}\label{proofs}

The following local result is our main technical tool. As it stands, it does not even require the geodesic completeness of the underlying Riemannian manifolds. It is worth to point out that the result is  new even for real-valued maps, where we can take $R \to +\infty$. Indeed, even in this case, the explicit dependence on the harmonic radius of the source space was never obtained before.

\begin{Theorem}\label{balls}
For all  natural numbers $m, n \geq 2$ and all $1<p<+\infty$ there exists a constant $C=C(n,m,p)>0$, which only depends on the indicated parameters, with the following property: for all
\begin{itemize}
\item [-] connected Riemannian manifolds $M$, $N$ with $m=\dim(M)$, $n=\dim(N)$,
\item [-] $x \in M$, $y\in N$,
\item [-] $r\in(0,\min(r^{M}_{1,1/2}(x),1)/2 ]$, $R\in (0,r^{N}_{1,1/2}(y))$,
\item  [-] $u\in \CC^2(M ,N)$  with $u(B_{r}^{M}(x)) \subset B_{R}^{N}(y)$, 
\end{itemize}
the following estimate holds,
\begin{align*}
C^{-1}\|1_{B^M_{r/2}(x)} \Hess (u) \|_{p} &\leq      \| 1_{B^M_{2r}(x)}\Delta u \|_{p }  + R^{-1}\| 1_{B^M_{2r}(x)} d u\|^2_{2p}+ r^{-2}\| 1_{B^M_{2r}(x)} \dist_N(u,y) \|_{p} \\
&\quad+ r^{-1} \|1_{B^M_{2r}(x)} \Id u \|_{p}.
\end{align*}
\end{Theorem}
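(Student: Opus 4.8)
The strategy is to reduce the nonlinear (manifold-valued) Hessian estimate to the linear, scalar Calderón–Zygmund inequality on Euclidean balls, applied component-by-component in harmonic coordinates on both source and target. First I would fix centered harmonic coordinate systems $\phi: B_r^M(x) \to U \subset \mathbb{R}^m$ on the source (available since $r \leq \min(r_{1,1/2}^M(x),1)/2$) and $\eta: B_R^N(y) \to V \subset \mathbb{R}^n$ on the target (available since $R < r_{1,1/2}^N(y)$), so that in these coordinates the metric coefficients $g^{ij}$, $h^{\alpha\beta}$ and their first derivatives are controlled with the quantitative bounds \eqref{hr1}, \eqref{hr2}. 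Since $u(B_r^M(x)) \subset B_R^N(y)$, the composition $v := \eta \circ u \circ \phi^{-1}: U \to V$ is a well-defined $\mathbb{R}^n$-valued $\CC^2$ map, and the plan is to express $\Hess(u)$, $\Delta u$ and $du$ in these coordinates.

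**Key steps.** (1) Write out the coordinate expressions: $\Delta u$ has components $(\Delta u)^\gamma = g^{ij}(\partial_i\partial_j v^\gamma - {}^M\Gamma_{ij}^k \partial_k v^\gamma + {}^N\Gamma_{\alpha\beta}^\gamma(v)\,\partial_i v^\alpha \partial_j v^\beta)$, and similarly $\Hess(u)_{ij}^\gamma = \partial_i\partial_j v^\gamma - {}^M\Gamma_{ij}^k\partial_k v^\gamma + {}^N\Gamma_{\alpha\beta}^\gamma(v)\partial_i v^\alpha\partial_j v^\beta$. The pointwise equivalence of the bundle norms $|\Hess(u)|$, $|\Delta u|$, $|du|$ with the corresponding Euclidean norms of the coordinate arrays follows from \eqref{hr1} applied to both $g$ and $h$, with constants depending only on $m,n$; the volume measure is also comparable to Lebesgue measure on $U$ by \eqref{hr1}, so $\LL^p$-norms transfer with uniform constants. (2) On $U \subset \mathbb{R}^m$, apply the scalar Euclidean Calderón–Zygmund inequality on balls to each component $v^\gamma$: the key is the identity $\partial_i\partial_j v^\gamma = \Hess(u)_{ij}^\gamma + {}^M\Gamma_{ij}^k\partial_k v^\gamma - {}^N\Gamma_{\alpha\beta}^\gamma(v)\partial_i v^\alpha\partial_j v^\beta$, so that $\|D^2 v^\gamma\|_p$ on a smaller ball is controlled by $\|\Delta_{\text{Eucl}} v^\gamma\|_p$ plus lower-order terms plus the distance term; but to get the estimate with $\Delta u$ rather than $\Delta_{\text{Eucl}} v$ I would instead work directly: the natural route is to use that in harmonic coordinates the leading part of $\Delta u$ is $g^{ij}\partial_i\partial_j v^\gamma$, which is a uniformly elliptic operator with coefficients bounded via \eqref{hr1}, \eqref{hr2}; then the standard interior $\LL^p$ (Agmon–Douglis–Nirenberg / Calderón–Zygmund) estimate for such operators gives $\|D^2 v^\gamma\|_{\LL^p(B_{r/2})} \lesssim \|g^{ij}\partial_i\partial_j v^\gamma\|_{\LL^p(B_{2r})} + r^{-2}\|v^\gamma - v^\gamma(x)\|_{\LL^p(B_{2r})} + r^{-1}\|Dv^\gamma\|_{\LL^p(B_{2r})}$, with scaling in $r$ tracked explicitly and the constant depending only on $m,n,p$ and the harmonic-radius bounds (hence only on $m,n,p$). (3) Absorb the difference between $g^{ij}\partial_i\partial_j v^\gamma$ and $(\Delta u)^\gamma$: this difference involves the first-order term $g^{ij}{}^M\Gamma_{ij}^k\partial_k v^\gamma$, which vanishes identically because $\phi$ is harmonic ($g^{ij}{}^M\Gamma_{ij}^k = 0$ is exactly the harmonicity of the coordinate functions), and the target Christoffel term $g^{ij}{}^N\Gamma_{\alpha\beta}^\gamma(v)\partial_i v^\alpha\partial_j v^\beta$, which is pointwise bounded by $C\,|{}^N\Gamma|\,|du|^2 \lesssim R^{-1}|du|^2$ using the scaled bound \eqref{hr2} on the target metric derivatives (giving $|{}^N\Gamma| \lesssim R^{-1}$ in the $R$-scaled chart). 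Integrating the $p$-th power over $B_{2r}$ yields the $R^{-1}\|1_{B_{2r}}du\|_{2p}^2$ term by Hölder. (4) Finally, reassemble: convert $\|D^2 v\|_{\LL^p(B_{r/2})}$ back to $\|1_{B_{r/2}^M(x)}\Hess(u)\|_p$ (the coordinate Hessian differs from $D^2 v$ by the same Christoffel terms, again absorbed into $r^{-1}\|du\|_p$ and $R^{-1}\|du\|_{2p}^2$), and convert the distance term: $|v^\gamma - v^\gamma(x)|$ on $B_{2r}$ is comparable to $\dist_N(u,y)$ by \eqref{hr1} on the target (the harmonic coordinates are bi-Lipschitz onto their image with uniform constant), giving $r^{-2}\|1_{B_{2r}^M(x)}\dist_N(u,y)\|_p$.

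**Main obstacle.** The delicate point is keeping \emph{all} constants dependent only on $m,n,p$ while carrying the correct powers of $r$ and $R$. This requires a careful two-parameter rescaling: rescale the source chart by $r$ to normalize $U$ to a unit-size ball (so the source harmonic-radius condition becomes a uniform bound and the interior CZ constant depends only on $m,n,p$), and independently rescale the target chart by $R$ so that $|{}^N\Gamma| \lesssim R^{-1}$ with a dimensional constant. One must check that under this rescaling the harmonicity of $\phi$ is preserved (it is, since harmonicity is conformally... no — it is preserved under affine rescaling of coordinates since $g^{ij}\Gamma_{ij}^k$ scales homogeneously), that \eqref{hr1}–\eqref{hr2} pass to the rescaled charts with the stated $r$-, $R$-weights, and that the passage $B_{r/2} \subset B_r$, $B_{2r} \subset$ (chart domain of radius $r$) is consistent — here the hypothesis $r \leq \min(r_{1,1/2}^M(x),1)/2$ is exactly what guarantees $B_{2r}^M(x)$ still lies in the harmonic chart. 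A secondary subtlety is that the target Christoffel symbols are evaluated at $v(\cdot)$, a moving point, so one needs the \emph{sup}-bound $\sup_{B_R^N(y)}|{}^N\Gamma| \lesssim R^{-1}$ rather than a pointwise bound at $y$; this is supplied by \eqref{hr2} which bounds $\sup_{B_R} r^{|\beta|}|\partial_\beta h^{\alpha\beta}|$, controlling $\partial h$ hence $\Gamma$ uniformly on the whole target ball. I expect the bookkeeping of these scalings, rather than any single analytic ingredient, to be the crux.
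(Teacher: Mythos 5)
Your proposal is correct and follows essentially the same route as the paper: harmonic coordinates on $B_{2r}^M(x)$ and $B_R^N(y)$, a componentwise decomposition of $\Hess(u)$ and $\Delta u$ separating the source Christoffel contribution (controlled by $r^{-1}$, partly cancelling via harmonicity $g^{ij}\,{}^M\Gamma^k_{ij}=0$) from the target Christoffel contribution (controlled by $R^{-1}$ and producing the quadratic term), followed by a scaled interior $\LL^p$ elliptic estimate for $g^{ij}\partial_i\partial_j$ applied to each $u^\alpha$ without boundary conditions, which is exactly the paper's Lemma~\ref{him2} (rescaled Gilbarg--Trudinger Theorem~9.11). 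The only cosmetic difference is that your version of the interior estimate carries an explicit $r^{-1}\|Dv^\gamma\|_p$ term on the right, which the paper's Lemma~\ref{him2} does not need, but this is harmless since such a term already appears in the final inequality.
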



The proof of Theorem \ref{balls} is based on the following Lemma. The remarkable fact is that it holds for functions in $\WW^{2,q}$ and thus does not need any decay at the boundary. Only this fact makes it possible at all to formulate global Calder\'on-Zygmund inequalities, as in general there is no substitute for compactly supported functions. 

\begin{Lemma}\label{him2} Let $s\in (0,1]$, $q\in (1,\infty)$, and let $P$ be a second order smooth elliptic differential operator of the form $P=\sum_{i,j=1}^ma^{ij}\partial_i \partial_j $ which is defined on the Euclidean ball $B^{\IR^m}_{2s}(0)\subset \IR^m$. Assume that $(a^{ij})\geq 1/2$ as a bilinear form, that for all $i,j$ the function $a^{ij}$ is Lipschitz continuous, and pick $\Lambda>0$, $0<\alpha \leq  1$ such that
$$
\max_{ij}[a^{ij}]_{0,\alpha;B^{\IR^m}_{2s}(0)}\leq  \Lambda s^{-\alpha},\>\>\max_{ij}\left\|a^{ij}\right\|_{\LL^{\infty}(B^{\IR^m}_{2s}(0))}\leq \Lambda,
$$
where
$$
[f]_{0,\alpha;\Omega}=\sup_{x,y\in\Omega, x\ne y}\f{|f(x)-f(y)|}{|x-y|^{\alpha}}
$$
denotes the $\CC^{\alpha}$-seminorm of a function $f:\Omega\to \IR$. Then, there is a constant $C=C(m,\Lambda,\alpha,q)>0$ which only depends on the indicated parameters, such that for every $u\in \WW^{2,q}(B^{\IR^m}_{2s}(0))$ one has 
\begin{align*}
&\|u\|_{\LL^{q}(B^{\IR^m}_{s}(0))}+\Big\|\Big(\sum_{i=1}^m(\partial_i u)^2\Big)^{1/2}\Big\|_{\LL^{q}(B^{\IR^m}_{s}(0))}+\Big\|\Big(\sum_{i,j=1}^m(\partial_i\partial_j u)^2\Big)^{1/2}\Big\|_{\LL^{q}(B^{\IR^m}_{s}(0))}\\
&\leq C \big(  \left\|Pu\right\|_{\LL^{q}(B^{\IR^m}_{2s}(0))}+s^{-2} \left\|u\right\|_{\LL^{q}(B^{\IR^m}_{2s}(0))}\big).
\end{align*}
\end{Lemma}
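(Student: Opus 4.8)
The plan is to deduce the estimate from the classical interior $\WW^{2,q}$ Calder\'on--Zygmund inequality for non-divergence form elliptic operators with continuous principal coefficients (for instance Theorem~9.11 in Gilbarg--Trudinger, or Chapter~9 in Chen--Wu), the only genuine work being to run the scaling carefully in $s$ and to verify that the constant produced by that theorem depends solely on $m$, $q$, the ellipticity constant $1/2$, the bound $\Lambda$, and the modulus of continuity of the $a^{ij}$. Throughout, write $|\nabla^k u|:=\big(\sum_{\beta\in\IN^m,\,|\beta|=k}(\partial_\beta u)^2\big)^{1/2}$ for $k=0,1,2$, so that the left-hand side of the Lemma is $\sum_{k=0}^{2}\big\||\nabla^k u|\big\|_{\LL^q(B^{\IR^m}_s(0))}$.

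\emph{Rescaling.} First I would pass to the unit ball. Set $v(y):=u(sy)$, $\tilde a^{ij}(y):=a^{ij}(sy)$ and $\tilde P:=\sum_{i,j}\tilde a^{ij}\partial_i\partial_j$ on $B^{\IR^m}_{2}(0)$, so that $\partial_\beta v(y)=s^{|\beta|}(\partial_\beta u)(sy)$, $(\tilde P v)(y)=s^{2}(Pu)(sy)$, and $v\in\WW^{2,q}(B^{\IR^m}_{2}(0))$. The hypotheses scale in the right way: $(\tilde a^{ij})\geq 1/2$, $\max_{ij}\|\tilde a^{ij}\|_{\LL^\infty(B^{\IR^m}_{2}(0))}\leq\Lambda$, and, crucially,
\[
[\tilde a^{ij}]_{0,\alpha;B^{\IR^m}_{2}(0)}=s^{\alpha}\,[a^{ij}]_{0,\alpha;B^{\IR^m}_{2s}(0)}\leq s^{\alpha}\cdot\Lambda s^{-\alpha}=\Lambda,
\]
so the $\tilde a^{ij}$ all obey the single modulus of continuity $t\mapsto\Lambda t^{\alpha}$, which depends only on $\Lambda$ and $\alpha$.

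\emph{Interior estimate and unscaling.} Next I would apply the interior $\WW^{2,q}$ estimate to $v$, regarded as a solution of $\tilde P v=g$ with $g:=\tilde P v\in\LL^q(B^{\IR^m}_{2}(0))$, on the concentric balls $\overline{B^{\IR^m}_{1}(0)}\subset B^{\IR^m}_{2}(0)$; by the previous paragraph the constant it produces has the form $C_0=C_0(m,\Lambda,\alpha,q)$, and
\[
\sum_{k=0}^{2}\big\||\nabla^k v|\big\|_{\LL^q(B^{\IR^m}_{1}(0))}\leq C_0\big(\|\tilde P v\|_{\LL^q(B^{\IR^m}_{2}(0))}+\|v\|_{\LL^q(B^{\IR^m}_{2}(0))}\big).
\]
Because this is an \emph{interior} estimate, passing from $B^{\IR^m}_2(0)$ to the strictly smaller $B^{\IR^m}_1(0)$, no hypothesis on the boundary behaviour of $v$ enters, which is exactly the feature that later makes global Calder\'on--Zygmund inequalities possible. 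Now I would undo the scaling using $\big\||\nabla^k v|\big\|_{\LL^q(B^{\IR^m}_{1}(0))}=s^{k-m/q}\big\||\nabla^k u|\big\|_{\LL^q(B^{\IR^m}_{s}(0))}$, $\|\tilde P v\|_{\LL^q(B^{\IR^m}_{2}(0))}=s^{2-m/q}\|Pu\|_{\LL^q(B^{\IR^m}_{2s}(0))}$ and $\|v\|_{\LL^q(B^{\IR^m}_{2}(0))}=s^{-m/q}\|u\|_{\LL^q(B^{\IR^m}_{2s}(0))}$; substituting, cancelling the factor $s^{m/q}$ and then dividing by $s^{2}\leq 1$ gives
\[
\sum_{k=0}^{2}s^{k-2}\big\||\nabla^k u|\big\|_{\LL^q(B^{\IR^m}_{s}(0))}\leq C_0\big(\|Pu\|_{\LL^q(B^{\IR^m}_{2s}(0))}+s^{-2}\|u\|_{\LL^q(B^{\IR^m}_{2s}(0))}\big).
\]
Since $s\in(0,1]$ forces $s^{k-2}\geq 1$ for $k=0,1,2$, the left-hand side dominates $\sum_{k=0}^{2}\big\||\nabla^k u|\big\|_{\LL^q(B^{\IR^m}_{s}(0))}$, and the Lemma follows with $C:=C_0$ (up to a harmless dimensional factor when passing between the coordinatewise derivative norms and the Euclidean ones).

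\emph{Where the difficulty sits.} The argument is largely bookkeeping; the one genuine point is to confirm that a uniform H\"older bound on the coefficients (rather than some finer structural condition) already suffices for the interior $\WW^{2,q}$ constant to be controlled by the stated parameters --- which it does, since it pins down a common modulus of continuity --- and that no residual power of $s$ survives the unscaling. If one prefers to avoid citing the interior estimate, the standard self-contained alternative is to freeze the principal coefficients of $\tilde P$ at the centre of the rescaled ball, use the constant-coefficient Calder\'on--Zygmund inequality for $\sum_{ij}\tilde a^{ij}(0)\partial_i\partial_j$, control the perturbation $\sum_{ij}(\tilde a^{ij}-\tilde a^{ij}(0))\partial_i\partial_j v$ by the H\"older bound on a sufficiently small ball, and close the estimate by a covering/iteration argument; invoking the classical interior estimate is simply the shortest route.
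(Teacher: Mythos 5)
Your argument is correct and is essentially the same as the paper's proof: reduce to the unit ball $s=1$ via the scaling $u\mapsto u(s\cdot)$, $a^{ij}\mapsto a^{ij}(s\cdot)$, invoke Theorem~9.11 of Gilbarg--Trudinger there, and unscale, noting that the H\"older seminorm hypothesis $[a^{ij}]_{0,\alpha}\leq\Lambda s^{-\alpha}$ is exactly what makes the modulus of continuity of the rescaled coefficients $s$-independent. Your writeup is somewhat more explicit about the unscaling bookkeeping and the harmless loss coming from $s^{k-2}\geq 1$, but there is no difference in substance.
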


\begin{proof} For $s=1$ this is precisely the content of Theorem 9.11 in \cite{GT}. The general case  follows from applying this to the scaled operator, resp., function
$$
\tilde{P}:=\sum_{ij}\widetilde{a^{ij}}\partial_i \partial_j,\quad \tilde{u} 
$$  
(where $\tilde{h}(z):=h(zs)$ denotes the scaling operator applied to a function $h:B^{\IR^m}_{2s}(0)\to\IR$) and then scaling back. Note here that  
$$
\tilde{P}\tilde{u}= s^2 \widetilde{Pu},\quad \partial_i\partial_j \tilde{u}= s^2 \widetilde{\partial_i\partial_j u},\quad \partial_i  \tilde{u}= s \widetilde{\partial_i  u},\quad \|\tilde{\cdot}\|_{L^{q}(B^{\IR^m}_{2}(0))}= s^{-m/q}\left\|\cdot\right\|_{L^{q}(B^{\IR^m}_{2s}(0))}
$$
and that
$$
\max_{ij}[\widetilde{a^{ij}}]_{0,\alpha;B^{\IR^m}_{2}(0)}\leq  \Lambda ,
$$
while $(\widetilde{a^{ij}})\geq 1/2$ and $\max_{ij}\left\|\widetilde{a^{ij}}\right\|_{\LL^{\infty}(B^{\IR^m}_{2}(0))}\leq \Lambda$ remain unchanged. 
\end{proof}

Now we can give the proof of Theorem \ref{balls}:

\begin{proof}[Proof of Theorem \ref{balls}] In the sequel, $ X\lesssim Y$ means that there exists a constant $b\in (0,\infty)$, which only depends on $m$ and $n$, such that $X\leq b Y$. In addition,
$$
|A|_{\mathrm{HS}}= \sqrt{\sum_{ij} A_{ij}A _{ij}}
$$
denotes the Hilbert-Schmidt norm of any real-valued matrix $A=(A_{ij})$, and we will use the Einstein sum convention.\\
Fix a $\CC^{1,1/2}$-harmonic coordinate system on $B^{M}_{2r}(x)$ and one on $B^{N}_{R}(y)$. The following pointwise (in)equalities are all understood to hold on $B^{M}_{2r}(x)$. The Hessian of $u$ has the coordinate expression
\[
(\Hess (u))^{\alpha}_{ij} = \Hess (u^{\alpha})_{ij} + \text{ }^{N}\Gamma^{\alpha}_{\beta \gamma}(u)\partial_{i}u^{\beta} \partial_{j}u^{\gamma},
\]
where $i,j,k \in \{1,...,m\}$, $\alpha,\beta,\gamma \in \{1,...,n\}$, $u = (u^{1},...,u^{n})$ and
\[
\Hess(u^{\alpha})_{ij} =  \partial_{i}\partial_{j} u^{\alpha} - \text{ }^{M}\Gamma^{l}_{ij} \partial_{l} u^{\alpha}.
\]
Taking traces we also get the coordinate expression of its Laplacian
\begin{align}\label{helpp}
(\Delta u)^{\alpha} = \Delta u^{\alpha} + \text{ }^{N}\Gamma^{\alpha}_{\beta \gamma}(u) g^{ij } \partial_{i}u^{\beta} \partial_{j}u^{\gamma},
\end{align}
noting that in harmonic coordinates one has
\begin{align}\label{cua133}
\Delta u^{\alpha} = g^{ij}\partial_i\partial_j u^{\alpha}.
\end{align}

Let us compute
\begin{align*}
|\Hess (u) |^{2} &= (\Hess (u))^{\alpha}_{ij} (\Hess (u))^{\beta}_{lk} g^{ik}g^{jl}h_{\alpha \beta}(u)\\
&= \sum_{j,k} (G^{-1}(\Hess (u))^{\alpha})_{kj}(G^{-1}(\Hess (u))^{\beta})_{jk} h_{\alpha \beta}(u),
\end{align*}
where $(\Hess (u))^{\alpha} = ((\Hess (u))^{\alpha}_{ij})$ and $G^{-1}=(g^{ij})$. By the choice of $r,R$, setting
$$
C_1:=C_1(r):=r^{-1},\quad C_2:=C_2(R):=R^{-1},
$$
the following estimates hold:
\begin{align} \nonumber
& \max_{B^{M}_{2r}(x)} |\text{ }^{M}\Gamma^{l}|_{ \mathrm{HS}}  \lesssim C_1 \\\label{cua2}
&\max_{B^{M}_{2r}(x)} |G^{-1}|_{ \mathrm{HS}}  \lesssim 1,\quad [g^{ij}]_{0,1/2,B^M_{2r}(x)}\lesssim r^{-1/2} \\ \nonumber
&(1/2) (\delta^{ij} )  \leq G \leq 2 (\delta^{ij}) 
\end{align}
and
\begin{align*}
& \max_{B_{R}^{N}(y)} |\text{ }^{N}\Gamma^{\alpha }|_{ \mathrm{HS}}  \lesssim C_2\\ 
& \max_{B^{N}_{R}(y)} |H |_{ \mathrm{HS}} \lesssim 1\\ 
&(1/2) (\delta^{\alpha \beta})\leq H \leq 2 (\delta^{\alpha \beta}). 
\end{align*}
where we have set $G=(g_{ij})$, $H = (h_{\alpha \beta})$,  ${}^{M}\Gamma^{l} = ({}^{M}\Gamma^{l}_{ij})$ and ${}^{N}\Gamma^{\alpha} = ({}^{N}\Gamma^{\alpha}_{ij})$.
In particular, using
$$
|du|^{2} = g^{ij}h_{\alpha\beta}(u)\partial_{i}u^{\alpha}\partial_{j}u^{\beta},
$$
we have
\[
  |\partial u|_{ \mathrm{HS}} \lesssim   |du |,
\]
with $\partial u = (\partial_{i}u^{\alpha})$.
Then
\begin{align*}
 |\Hess (u) | &\lesssim  \sqrt{\sum_{j,k,\alpha} (G^{-1}(\Hess (u))^{\alpha})_{kj}^{2}}\\
 &=  \sqrt{\sum_{\alpha} | G^{-1}(\Hess (u))^{\alpha} |^{2}_{\mathrm{HS}}}\\
 &\leq  \sum_{\alpha} | G^{-1}\Hess (u^{\alpha}) + G^{-1}T^{\alpha}|_{\mathrm{HS}}\\
 &\leq \sum_{\alpha} \{ | G^{-1}\Hess (u^{\alpha}) |_{\mathrm{HS}} + | G^{-1}T^{\alpha}|_{\mathrm{HS}} \} \\
 &\leq  |G^{-1}|_{\mathrm{HS}} \sum_{\alpha} \{ | \Hess(u^{\alpha}) |_{ \mathrm{HS}}  +  | T^{\alpha} |_{\mathrm{HS}}\}\\
 &\lesssim  \sum_{\alpha} \left\{ | \Hess(u^{\alpha})|_{\mathrm{HS}} + | T^{\alpha}|_{\mathrm{HS}}\right\}
\end{align*}
with
\[
T^{\alpha}_{ij}: = \text{ }^{N}\Gamma^{\alpha}_{\beta \gamma}(u)\partial_{i}u^{\beta} \partial_{j}u^{\gamma} .
\]
Since
\[
 |T^{\alpha}|_{\mathrm{HS}} \lesssim |\text{ }^{N}\Gamma^{\alpha}(u)|_{\mathrm{HS}} \cdot |\partial u|^{2}_{\mathrm{HS}} \lesssim C_2 |\Id u|^{2},
\]
and
\begin{align*}
| \Hess(u^{\alpha}) |_{\mathrm{HS}} &\lesssim | (\partial_{i} \partial_{j} u^{\alpha})_{ij}  |_{\mathrm{HS}} + \max_l |\text{ }^{M}\Gamma^l|_{\mathrm{HS}} |\partial u|_{\mathrm{HS}}\\
&\lesssim | (\partial_{i} \partial_{j} u^{\alpha})_{ij} |_{\mathrm{HS}} + C_1 | d u|=\big(\sum_{ij} (\partial_{i} \partial_{j}  u^{\alpha})^2\big)^{1/2}  + C_1 | d u|
\end{align*}
from the above we deduce
$$
\|1_{B^M_{r/2}(x)} \Hess (u) \|_{p } \lesssim  \sum_{\alpha} \big\|1_{B^M_{r/2}(x)}\big(\sum_{i,j}(\partial_{i} \partial_{j} u^{\alpha})^2\big)^{1/2}\big\|_{p}
+ C_1 \|1_{B^M_{r/2}(x)} \Id u \|_{p} +  C_2 \|1_{B^M_{r/2}(x)} \Id u \|^{2}_{2p}.
$$
In view of 
$$
B^M_{r/2}(x)\subset B^{\IR^m}_{r/\sqrt{2}}(x)\subset  B^{\IR^m}_{\sqrt{2}r}(x)\subset B^M_{2r}(x)
$$
and using (\ref{cua133}) and (\ref{cua2}), we can apply Lemma \ref{him2} with $s=r / \sqrt{2}$, $P=g^{ij}\partial_i\partial_j$, and use $ \mathrm{vol}( d y)\lesssim d y \lesssim \mathrm{vol}( dy)$ 
to obtain
\begin{align*}
\|1_{B^M_{r/2}(x)} \Hess (u) \|_{p} &\lesssim   C' \sum_{\alpha} \{ \| 1_{B^M_{2r}(x)}\Delta u^{\alpha}\|_{p }  + r^{-2}\| 1_{B^M_{2r}(x)} u^{\alpha}\|_{p}\}\\
&+ C_1 \|1_{B^M_{r/2}(x)} \Id u \|_{p} +  C_2 \| 1_{B^M_{r/2}(x)}\Id u \|^{2}_{2p}
\end{align*}
for some constant $C' = C'(m,n,p )>0$. Using (\ref{helpp}) we get
\begin{align}
|\Delta u^{\alpha}|&\leq |(\Delta u)^{\alpha}|+|\text{ }^{N}\Gamma^{\alpha}_{\beta \gamma}(u) g^{ij } \partial_{i}u^{\beta} \partial_{j}u^{\gamma}|\\
&\lesssim |\Delta u|+ \max_{\alpha}|\text{ }^{N}\Gamma^{\alpha}|_{\mathrm{HS}}|d u|^2\lesssim |\Delta u|+ C_2|d u|^2
\end{align}
and furthermore
\[
|u^{\alpha}|\leq  \sum_{\alpha} \sqrt{(u^{\alpha})^{2}}\leq \sqrt{2} \dist_{N}(u,y),
\]
so that
\begin{align*}
\|1_{B^M_{r/2}(x)} \Hess (u) \|_{p} &\lesssim   C'  \{\| 1_{B^M_{2r}(x)}\Delta u \|_{p }  + C_2\| 1_{B^M_{2r}(x)} d u\|^2_{2p}+ r^{-2}\| 1_{B^M_{2r}(x)} \dist_N(u,y) \|_{p}\}\\
&+ C_1 \|1_{B^M_{r/2}(x)} \Id u \|_{p} +  C_2 \| 1_{B^M_{r/2}(x)}\Id u \|^{2}_{2p},
\end{align*}
completing the proof.
\end{proof}

The proof of Theorem \ref{main} is obtained using Theorem \ref{balls} in two different ways, according to the fact that the center of the balls is taken in a certain partition of the source manifold.

\begin{proof}[Proof of Theorem \ref{main}] 
Let  $\psi \in \CC^{2}(M,N)$ be an $\LLL$-Lipschitz map. We set
$$
\hat{r}:= \frac{1}{16}\min \left(r_{1,1/2}(M), \frac{r_{1,1/2}(N)}{\max(\LLL,1)},1 \right)
$$
and we observe for future use that 
\begin{align}\label{lip-estimate}
&\text{for all $\bx \in M$, $x \in B^{M}_{2\hat{r}}(\bx)$ one has }\\\nn
&\dist_{N}(\psi(x),\psi(\bx))  < \frac{1}{8} r_{1,1/2}(N).
\end{align}
Indeed, if $\LLL<+\infty$ this follows from
$$
\dist_{N}(\psi(x),\psi(\bx)) \leq \LLL \dist_{M}(x,\bx) <  2\LLL \hat{r},
$$
while if $\LLL=+\infty$ we can assume $r_{1,1/2}(N)=+\infty$ and the statement becomes trivial.\\
Now, given a reference point $o \in N$, we define
$$
\Omega_{\psi,o} = \psi^{-1}\left(B^{N}_{\frac{1}{4}r_{1,1/2}(N)}(o)\right)\!,
$$
where as usual $B_{\infty}^N(o):=N$, and we consider the decomposition
$$
M = \Omega_{\psi,o} \cup \Omega_{\psi,o}^{c}.
$$
We are going to elaborate separately the local estimates on the balls $B^{M}_{2\hat{r}}(\bx)$ according to the fact that the center $\bx$ is taken either in $\Omega_{\psi,o}$ or in $\Omega_{\psi,o}^{c}$.\smallskip

Let $\bx \in \Omega_{\psi,o}$. We claim that
$$
\psi(B^M_{2\hat{r}}(\bx))\subset B^N_{\frac{1}{2}r_{1,1/2}(N)}(o). 
$$
Indeed, if $\dist_{M}(x,\bx)<2\hat{r}$, keeping in mind \eqref{lip-estimate}, we have
\begin{align*}
 \dist_{N}(\psi(x),o) &\leq \dist_{N}(\psi(x),\psi(\bx)) + \dist_{N}(\psi(\bx),o)\\
 &\leq \frac{1}{8}r_{1,1/2}(N) + \frac{1}{4}r_{1,1/2}(N) < \frac{1}{2}r_{1,1/2}(N),
\end{align*}
as claimed. If $r_{1,1/2}(N)<\infty$, then applying the local inequality from Theorem \ref{balls} to $r=\hat{r}/4$, $y=o$, and  $R =  r_{1,1/2}(N)/2$ gives that
\begin{align*} 
 C^{-1}\|1_{B^M_{\hat{r}/8}(\bx)} \Hess ( \psi)\|_{p}  \leq   & \quad \|1_{B^M_{\hat{r}}(\bx)}\Delta \psi\|_{p} + \hat{r}^{-1}\| 1_{B^M_{\hat{r}}(\bx)}d\psi\|_{p}+  r_{1,1/2}(N)^{-1} \| 1_{B^M_{\hat{r}}(\bx)}d\psi \|_{2p}^2 \\
&+
\hat{r}^{-2}\| 1_{B^{M}_{\hat{r}}(\bx)}  \dist_{N}(\psi , o) \|_{p},
 \end{align*}
 where $C=C(p,m,n)>0$. In case $r_{1,1/2}(N)=+\infty$, the same inequality may be deduced by applying Theorem \ref{balls} to $r=\hat{r}/4$, $y=o$, and $R>0$ and taking $R\to\infty$.\\ 
Now, we let $\bx \in \Omega_{\psi,o}^{c}$. We claim that
\begin{equation}\label{omega-c}
\text{for every $x \in B^{M}_{2\hat{r}}(\bx)$ one has $\dist_{N}(\psi(x),\psi(\bx)) \leq \dist_{N}(\psi(x),o)$}. 
\end{equation}
Indeed, according to \eqref{lip-estimate},
$$
r_{1,1/2}(N) > 8 \dist_{N}(\psi(x),\psi(\bx)).
$$
Combining this latter with the triangle inequality and the fact that $\bx \in \Omega^{c}_{\psi,o}$, we deduce
\begin{align*}
 \dist_{N}(\psi(x),o) &\geq \dist_{N}(\psi(\bx),o) - \dist_{N}(\psi(x),\psi(\bx)) \\
 &\geq \frac{1}{4}r_{1,1/2}(N) -  \dist_{N}(\psi(x),\psi(\bx)) \\
 &> \frac{1}{4}8 \dist_{N}(\psi(x),\psi(\bx)) -  \dist_{N}(\psi(x),\psi(\bx))\\
 &=  \dist_{N}(\psi(x),\psi(\bx)),
 \end{align*}
as claimed. We shall use \eqref{omega-c} into the local estimate on $B^{M}_{\hat{r}}(\bx)$ in order to get rid of the dependence of the $0$-order term from the center $\bx \in \Omega^{c}_{\psi,o}$. Indeed, by \eqref{lip-estimate},
$$
\psi(B^{M}_{2\hat{r}}(\bx)) \subset B^{N}_{r_{1,1/2}(N)/8}(\psi(\bx)).
$$
Therefore, if $r_{1,1/2}(N)<+\infty$, we can apply the local inequality from Theorem \ref{balls} to $r=\hat{r}/4$, $y=\psi(\bx)$, $R =  r_{1,1/2}(N)/8 $ and obtain
\begin{align*} 
 C^{-1}\|1_{B^M_{\hat{r}/8}(\bx)} \Hess ( \psi)\|_{p}  \leq &\quad   \|1_{B^M_{\hat{r}}(\bx)}\Delta \psi\|_{p} + \hat{r}^{-1}\| 1_{B^M_{\hat{r}}(\bx)}d\psi\|_{p}
+  r_{1,1/2}(N)^{-1} \| 1_{B^M_{\hat{r}}(\bx)}d\psi \|_{2p}^2 \\
&+
\hat{r}^{-2}\| 1_{B^{M}_{\hat{r}}(\bx)}  \dist_{N}(\psi ,  \psi(\bx)) \|_{p},
  \end{align*}
where $C=C(p,m,n)>0$. Again, the $r_{1,1/2}(N)=+\infty$ situation can be treated as above, producing  the same inequality in this case, too.\\
Whence, using \eqref{omega-c}, we conclude the validity of the estimate
\begin{align*} 
 C^{-1}\|1_{B^M_{\hat{r}/8}(x)} \Hess ( \psi)\|_{p}  \leq &\quad   \|1_{B^M_{\hat{r}}(x)}\Delta \psi\|_{p} + \hat{r}^{-1}\| 1_{B^M_{\hat{r}}(x)}d\psi\|_{p}+  r_{1,1/2}(N)^{-1} \| 1_{B^M_{\hat{r}}(\bx)}d\psi \|_{2p}^2\\
& +
\hat{r}^{-2}\| 1_{B^{M}_{\hat{r}}(\bx)}  \dist_{N}(\psi , o) \|_{p}.
 \end{align*}
Summarizing, we have obtained that this estimate holds, for some absolute constant $C=C(p,m,n)>0$, regardless of the location of the center $\bx \in M$.\smallskip

We now pick a sequence of points $\{x_j:j\in \IN \}\subset M$ such that $\{B^{M}_{\hat{r} /8}(x_{j}) :j\in\IN\}$ is a cover of $M$ and $\{B^{M}_{\hat{r} }(x_{j}) :j\in\IN\}$ has an intersection multiplicity $\leq D=D(A)\in (0,\infty)$.  Then summing over $j$ in the last inequality, using monotone convergence and
$$
\sum_j1_{B^M_{\hat{r}/8}(x_j)}\geq 1,\quad  
\sum_j1_{B^M_{   \hat{r}}(x_j) }\leq D,
$$
implies
\begin{align*} 
 C^{-1}\| \Hess ( \psi)\|_{p}  \leq    \| \Delta \psi\|_{p} + \hat{r}^{-1}\|  d\psi\|_{p}+  r_{1,1/2}(N)^{-1} \| d\psi \|_{2p}^2 +
\hat{r}^{-2}\|    \dist_{N}(\psi , o) \|_{p},
 \end{align*}
where $C=C(p,m,n,A)>0$. This completes the proof.
\end{proof}

\begin{Remark}
We see from the above arguments that the only crucial property we have to require on the  map $\psi : M \to N$ is that, given $R>0$ there exists a suitable $r>0$ such that, for every $\bx \in M$, it holds $\psi(B^{M}_{r}(\bx)) \subseteq B^{N}_{R}(\psi(\bx))$. Call such a map an $(r,R)$-uniformly continuous map. Thus, whenever the rays $0<R<r_{1,1/2}(N)/16$ and $0<r<r_{1,1/2}(M)/16$ are fixed, a global inequality of the above type, with an explicit dependence on these rays, can be obtained for the class of $(r,R)$-uniformly continuous maps. 
\end{Remark}

\appendix

\section{On the infinite harmonic radius}
This appendix is devoted to a proof of the following
\begin{proposition*}
Let $(M,g)$ be a complete, non-compact, $m$-dimensional Riemannian manifold and assume that there exists some $o \in M$ and some $\alpha\in (0,1)$ such that $r_{1,\alpha}(o)= +\infty$. Then, $(M,g)$ is isometric to the Euclidean $\IR^m$.
\end{proposition*}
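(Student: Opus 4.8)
The plan is to use the hypothesis $r_{1,\alpha}(o)=+\infty$ to produce a family of harmonic charts around $o$ whose \emph{radius tends to infinity while the metric becomes flat}, to pass to a limit along these charts, and to identify the limit with a global isometry onto $M$. First I would unpack the definition at scale $\rho$: for every $\rho>0$ we are handed a centered $\Delta$-harmonic chart $\phi_\rho\colon B_\rho(o)\to U_\rho\subset\IR^m$ with $\phi_\rho(o)=0$, satisfying $(1/2)(\delta^{ij})\le(g^{ij}_{(\rho)})\le 2(\delta^{ij})$, $\rho\sup_{B_\rho(o)}|\partial g^{ij}_{(\rho)}|\le 1$ and $\rho^{1+\alpha}[\partial g^{ij}_{(\rho)}]_{0,\alpha}\le 1$. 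Writing $\psi_\rho:=\phi_\rho^{-1}$ and $g_{(\rho)}:=\psi_\rho^*g$, the eigenvalue bound passes to $(g_{(\rho),ij})$, so $\psi_\rho$ is bi-Lipschitz with dimensional constants between $(U_\rho,|\cdot|)$ and $(B_\rho(o),\dist_g)$; a straightforward continuity argument using the relative compactness of $B_\rho(o)$ then shows $U_\rho$ contains a Euclidean ball of radius $c\rho$ about $0$ for a dimensional $c>0$. Post-composing $\phi_\rho$ with a linear map $L_\rho$ (this keeps the coordinate functions harmonic and only changes the constants in the metric bounds, since $\|L_\rho\|,\|L_\rho^{-1}\|$ are bounded) we may assume $g_{(\rho),ij}(0)=\delta_{ij}$. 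Consequently, on each fixed $\overline{B^{\IR^m}_R(0)}$ and for $\rho\ge\rho_0(R)$ one gets $\|g_{(\rho)}-\delta\|_{C^0}\lesssim R/\rho$ together with $\|\partial g_{(\rho)}\|_{C^0}+[\partial g_{(\rho)}]_{0,\alpha}\lesssim 1/\rho$, hence $g_{(\rho)}\to\delta$ in $C^{1}_{\loc}(\IR^m)$ as $\rho\to\infty$.

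Next I would pass to the limit. Since every $B_\rho(o)$ is relatively compact and $M=\bigcup_\rho B_\rho(o)$, and since on each fixed $B^{\IR^m}_R(0)$ the maps $\psi_\rho$ are uniformly Lipschitz with values in the compact set $\overline{B_{\sqrt2R}(o)}$, Arzel\`a--Ascoli and a diagonal argument over $R\to\infty$ (along a subsequence $\rho\to\infty$) yield a Lipschitz map $\psi_\infty\colon\IR^m\to M$ with $\psi_\infty(0)=o$.

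Then I would show $\psi_\infty$ is a surjective isometry and conclude. For surjectivity: given $x\in M$, fix $\rho_0$ with $x\in B_{\rho_0}(o)$; for $\rho\ge\rho_0$ write $x=\psi_\rho(p_\rho)$ with $|p_\rho|\le C\dist_g(o,x)$, extract $p_\rho\to p_\infty$, and read off $\psi_\infty(p_\infty)=x$ from local uniform convergence. For distance preservation: for $p,q$ in a fixed ball and $\rho$ large, an almost-minimizing $g_{(\rho)}$-curve from $p$ to $q$ stays in a region where $g_{(\rho)}$ is $C^0$-close to $\delta$, so $\dist_g(\psi_\rho(p),\psi_\rho(q))=\dist_{g_{(\rho)}}(p,q)=(1+o(1))|p-q|$; letting $\rho\to\infty$ gives $\dist_g(\psi_\infty(p),\psi_\infty(q))=|p-q|$ for all $p,q$. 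Thus $\psi_\infty$ is a surjective distance-preserving map between connected Riemannian manifolds, hence by the Myers--Steenrod theorem a smooth Riemannian isometry $(\IR^m,\delta)\to(M,g)$, which proves the claim. (Incidentally, the stated hypotheses of completeness and non-compactness of $M$ are automatic: the closed balls about $o$ are compact, so $M$ is complete by Hopf--Rinow, and a compact $M$ cannot admit an everywhere-defined harmonic chart, so $r_{1,\alpha}(o)\le\diam(M)<\infty$ in that case.)

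The main obstacle is that the harmonic charts genuinely depend on the scale $\rho$, so there is no single chart in which to argue; one is forced into a limiting procedure, and because the harmonic radius only controls the metric in $C^{1,\alpha}$ — there is no a priori curvature bound — the regularity and the isometry property of $\psi_\infty$ must be obtained by soft compactness and comparison arguments rather than by elliptic regularity. Concretely, the two points needing care are verifying that $U_\rho$ really contains a Euclidean ball of radius comparable to $\rho$ (so that the charts "see" large balls), and verifying that the Riemannian distances $\dist_{g_{(\rho)}}$ converge to the Euclidean distance and not merely up to the fixed bi-Lipschitz constant; both follow from the decay $\sup|\partial g_{(\rho)}|\lesssim1/\rho$ but require a short argument to localize the competitor curves.
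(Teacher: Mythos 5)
Your proof is correct, and its core mechanism is the same as the paper's: harmonic charts at scales $\rho\to\infty$, pullback metrics whose first derivatives decay like $1/\rho$ by the $\CC^{1,\alpha}$-harmonic radius condition, convergence to a constant-coefficient metric, and a Myers--Steenrod finish. Where you diverge is in how the limit is promoted to a global isometry. The paper observes that the pointed manifolds $(B_{R_k}(o),g,o)$ converge in the pointed $\CC^1$ topology to the flat $(\IR^m,g_\infty,0)$ via the chart pullbacks, and trivially in the pointed $\CC^\infty$ topology to $(M,g,o)$, and then invokes uniqueness of pointed Gromov--Hausdorff limits among proper pointed metric spaces to identify the two; Myers--Steenrod then upgrades the metric isometry to a Riemannian one. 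You instead build the isometry $\psi_\infty$ explicitly as an Arzel\`a--Ascoli limit of the inverse chart maps $\psi_\rho=\phi_\rho^{-1}$ and verify surjectivity and distance-preservation by hand. The paper's route is shorter but leans on the GH-limit uniqueness theorem; yours is more elementary and self-contained but needs the two localization arguments you correctly flag: that $U_\rho$ contains a Euclidean ball of radius comparable to $\rho$ (obtained from the uniform bi-Lipschitz bound and relative compactness of $B_\rho(o)$), and that near-minimizers for $\dist_g$ between $\psi_\rho(p),\psi_\rho(q)$ stay inside $B_\rho(o)$ once $\rho$ is large, so that $\dist_{g_{(\rho)}}(p,q)$ really computes the ambient distance and not merely the intrinsic one on $B_\rho(o)$. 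Your closing observation that the completeness and non-compactness hypotheses are automatic consequences of $r_{1,\alpha}(o)=+\infty$ is also correct and worth recording.
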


\begin{proof}
By assumption, there exist a sequence of rays $R_k \to +\infty$ and a corresponding sequence $\varphi_k:B_{R_k}(o) \to \IR^m$ of harmonic coordinates charts centered at $o$ such that conditions  \eqref{hr1} and \eqref{hr2} are satisfied. We consider the corresponding  sequence $(B_{R_k}(o),g,o)$ of pointed Riemannian manifolds  and we show that it has a subsequence that converges in the $\CC^{1}$-topology to $(\IR^m,g_\infty,0)$, where $g_\infty$ is a scalar product with constant coefficients. In particular, $(\IR^m,g_\infty,0)$ is isometric to $\IR^m$. Since the same subsequence of pointed Riemannian manifolds obviously converges in the $\CC^{\infty}$-topology to $(M,g,o)$, we obtain the desired conclusion. Indeed, pointed $\CC^{1}$-convergence implies pointed GH-convergence and, since the limit  metric spaces are proper (they are complete Riemannian manifolds), they must be metrically isometric. But metrically isometric Riemannian manifolds are isometric in the Riemannian sense by the Myers-Steenrod Theorem. \\
Let 
\[
\Omega_k = \varphi_k(B_{R_k}(o)), \quad g_k = (\varphi_{k}^{-1})^\ast g, \quad 0_k = \varphi_k(o)=0.
\]
Observe that, by condition \eqref{hr1},  $\rball_{R_{k}/\sqrt{2}} (0)\Subset \Omega_k$. It follows that $\{\Omega_k\}$ exhausts $\IR^m$. Moreover $\Omega_k \Subset \rball_{\sqrt{2}R_{k}} (0) \Subset \Omega_{k^\prime}$, for every $k^\prime \gg 1$. In particular, each $\Omega_k$ is relatively compact and, up to extracting a subsequence, we can assume that $\Omega_k \Subset \Omega_{k+1}$.
Now, fix $k_0$. According to property \eqref{hr2}, for every $k \gg 1$ we have that the metric coefficients $(g_k)_{ij}$ of $g_k$ are uniformly bounded in $\CC^{1,\alpha}(\bar \Omega_{k_0})$. Since, by Ascoli-Arzel\'{a}, for any domain $D \Subset \IR^m$ and any $0 < \beta < \alpha$ the embedding $\CC^{1,\alpha}(D) \hookrightarrow \CC^{1,\beta}(D)$ is compact, we deduce that a subsequence $(g_{k'})_{ij}$ converges in the $\CC^{1}$-topology on $\Omega_{k_0}$. Now, we let $k_0$ increase to $+\infty$ and we use a diagonal argument to deduce the existence of a $\CC^1$ metric $g_\infty$ on $\IR^m$ such that, in the coordinates of $\IR^m$, a suitable subsequence $\{g_{k''}\}$ $\CC^1$-converges to $g_\infty$ uniformly on compact sets. We stress that $g_\infty$ is actually Riemannian and it is bi-Lip equivalent to the Euclidean metric $g_E$. Indeed, by taking the limit in condition \eqref{hr1} along $g_{k''}$ gives that $2^{-1}\cdot g_{\IR^{m}} \leq g_\infty \leq 2 \cdot g_{\IR^{n}}$. We show that $g_{\infty}$ has constant coefficients. To this end we recall that, by condition \eqref{hr2}, for every $k'' > k''_0$ it holds
\[
\sup_{\Omega_{k^{\prime\prime}_0}} |\partial (g_{k^{\prime\prime}})_{ij}| \leq \frac{1}{R_{k^{\prime\prime}}}.
\]
Letting $k^{\prime\prime} \to +\infty$ shows that the metric coefficients $(g_\infty)_{ij}$ are constant on $\Omega_{k^{\prime\prime}_0}$. Since $k^{\prime\prime}_0$ is arbitrary the claimed property follows. In particular, the covariant differentiation with respect to $g_\infty$ is Euclidean.\\
In conclusion, we have obtained that $(\Omega_{k^{\prime\prime}},g_{k^{\prime\prime}},0_{k^{\prime\prime}})$ $\CC^1$-converges to $(\IR^m,g_\infty,0)$ and this, by definition, means that $(B_{k^{\prime\prime}}(o), g, 0)$ $\CC^1$-converges to the same pointed Riemannian manifold.
\end{proof}

\begin{Acknowledgment*}
 The authors are grateful to Giona Veronelli for suggestions that led to an improvement of the presentation of the paper. The second author is partially supported by INdAM - GNAMPA.
\end{Acknowledgment*}

\end{document}